\newtheorem{theorem}            {Theorem}[section]
\newtheorem{proposition}        [theorem]{Proposition}
\newtheorem{lemma}[theorem]{Lemma}
\newtheorem{definition}[theorem]{Definition}
\newtheorem{remark}{Remark}
\newtheorem{cor}[theorem]{Corollary}
\let\oldproofname=\proofname
\renewcommand{\proofname}{\rm\bf{\oldproofname}}
\title{Szeg\H{o}-Type Limit Theorem in the Drury-Arveson Space}
\author{Arya Gayathri Memana}
\begin{document}
\maketitle
\begin{abstract}
   We state and prove a version of Szeg\H{o}'s first limit theorem for Toeplitz-like operators on the Drury-Arveson space in the unit ball.
\end{abstract}

\section{Introduction}

For the complex unit circle $\mathbb{T}:= \{z \in \mathbb{C}: |z|=1\}$, the Hardy space on $\mathbb{T}$ denoted by $H^2(\mathbb{T})$ is the closed subspace of $L^2(\mathbb{T})$ with vanishing negative Fourier coefficients. The functions  $f_n(\theta) = e^{i n \theta}$ for $n \in \mathbb{N} \cup \{0\}$ form an orthonormal basis for $H^2$.

Let $P$ be the orthogonal projection of $L^2$ onto $H^2$. For $\phi \in L^{\infty}(\mathbb{T})$, the operator 
\begin{align*}
    T_{\phi}(f) = P(\phi f)
\end{align*}
is called a Toeplitz operator with the symbol $\phi$. The matrix of $T_{\phi}$ is given by
 \[
T = \begin{bmatrix}
a_0 & a_{-1} & a_{-2} & \cdots & a_{-n} & \cdots \\
a_1 & a_0 & a_{-1} & \cdots & a_{-(n-1)} & \cdots \\
a_2 & a_1 & a_0 & \cdots & a_{-(n-2)} & \cdots \\
\vdots & \ddots & \ddots & \ddots & \ddots & \\
a_n & a_{n-1} & a_{n-2} & \cdots & a_0 & \cdots \\
\vdots & \ddots & \ddots & \ddots & \ddots & \ddots
\end{bmatrix},
\]
  where $a_j=\hat{\phi}(j)$ is the $j^{th}$ Fourier coefficient of $\phi$.\\
  
An object of interest in the study of Toeplitz operators is the Toeplitz $C^*$-algebra $\mathcal{T}$. It is the $C^*$ algebra generated by the shift operator on the Hardy space. Every element in this $C^*-$algebra is of the form $S= T+K$, where $T$ is a Toeplitz operator and $K$ is a compact operator on $H^2$.\\

Szegő's well-known first limit theorem about Toeplitz operators states the following \cite{MR1544404}: Consider a bounded positive Toeplitz operator $T_{\phi}$ on the Hardy space with the continuous symbol $\phi: \mathbb{T} \rightarrow \mathbb{R}$. Denote by $\lambda_1, \lambda_2,..\lambda_N$, the $N$ eigenvalues of the $N \times N$ truncation $T_N$ of $T$. Then, for a continuous function $f: \mathbb{R} \rightarrow \mathbb{R}$,
\begin{align}\label{eq11}
    \lim_{N \rightarrow \infty} \frac{1}{N} \sum_{i=1}^{N} f(\lambda_i) = \lim_{N \rightarrow \infty} \frac{1}{N} Tr(f(T_N)) =  \int_{\mathbb{T}} f(\phi) dm.
\end{align}
where $m$ is the normalized Lebesgue measure on $\mathbb{T}$ and $Tr$ is the usual trace of a matrix.\\

In particular, putting $f(x) = log(x)$, and exponentiating, we have

\begin{align}\label{eq12}
    \lim_{N \rightarrow \infty} det(T_N)^{\frac{1}{N}} = \exp\Big( \int_{\mathbb{T}} log(\phi) \  dm \Big)
\end{align}

More generally, ($\ref{eq11}$) holds for non-negative Lebesgue-integrable functions. See Section 1.6 in \cite{MR2105088} and Sections 1.3 and 5.4 in \cite{MR2223704} for definitions and Chapter 2 in \cite{MR2105088} for history and generalizations.\\

Let $P_N$ be the projection onto the linear span of $\{e^{ik \theta}: k=0,1,\cdots,N\}$ on the Hardy space. Then corresponding to each of the operators $P_N TP_N$ we can define a Borel probability measure $\mu_A^N$ on $\mathbb{R}$ as:
\begin{align} \label{eq15}
\mu_T^N(B) = \frac{\nu_T^N(B)}{rank (P_N)} \ \ \ (B \subseteq \mathbb{R}, B \ \text{Borel})
\end{align}
where $\nu_T^N$ is the number of eigenvalues of $P_N T P_N$ in B, counting multiplicities.\\

The spectrum of a Toeplitz operator is the interval $[a,b]$ where $a$ and $b$ are the min and max values of its symbol $\phi$ respectively. In particular, the spectrum of $P_N T P_N$ which is the support of $\mu_T^N$, is contained in $[a,b]$. Equation (\ref{eq11}) can now be read as: $\lim_{N \rightarrow \infty} \int_{a}^{b} f(x) d\mu_T^N(x) = \int_{a}^{b} f(x) d \mu_*(x)$ where $\mu_*$ is the push-forward of the Lebesgue measure under the map $\phi$. In this case, the support of $\mu_*$ is equal to the spectrum of the operator $T$. Thus, Szeg\H{o}'s theorem also gives a way to estimate the spectrum of a self-adjoint Toeplitz operator from the eigenvalues of its truncations.\\

In this paper, we prove a version of this theorem in the setting of the Drury-Arveson space, a function space on the d-dimensional ball (see Section 2 for definitions). We consider a $C^*$-algebra $\mathcal{T}_d$ analogous to the Toeplitz algebra $\mathcal{T}$. Each operator in this algebra will have a symbol $\phi$, which is a continuous function on the unit sphere. For a natural choice of projections $P_N$ we get the following:
\begin{theorem}\label{eq10}
    Let $T$ be a positive operator in $\mathcal{T}_d$ with a continuous symbol $\phi$ and let $[a,b]$ be the range of $\phi$. Suppose $\lambda_1, \lambda_2,\cdots, \lambda_{d_N}$ are the $d_N$ eigenvalues of $P_N TP_N$ where $d_N= rank(P_N)$. For a continuous function $f$ on $[a,b]$, we have,
    \begin{align*}
    \lim_{N \rightarrow \infty} \frac{1}{d_N} \sum_{i=1}^{d_N} f(\lambda_i) = \lim_{N \rightarrow \infty}\int_a^b f d \mu^{N}_T = \int_{\partial \mathbb{B}_d} f \circ \phi d \sigma
    \end{align*}
    where $\sigma$ is the normalized Lebesgue measure on  $\partial \mathbb{B}_d$.
\end{theorem}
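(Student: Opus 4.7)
The plan is three reductions. First, by Weierstrass approximation on $[a,b]$, which contains the spectrum of $P_N T P_N$ since that operator is a compression of $T$, every continuous $f$ is uniformly approximated by polynomials, and the error is controlled uniformly in $N$ via $\frac{1}{d_N}|\text{Tr}(f(P_N T P_N) - p(P_N T P_N))| \leq \|f - p\|_\infty$. A parallel bound holds on the integral side, so it suffices to treat $f(x) = x^k$.

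Second, rewrite
\begin{align*}
P_N T^k P_N = P_N T(P_N + Q_N) T \cdots T(P_N + Q_N) T P_N, \qquad Q_N := I - P_N,
\end{align*}
and expand into $2^{k-1}$ terms, exactly one of which equals $(P_N T P_N)^k$. Every other term contains at least one factor $P_N T Q_N$ (take the leftmost transition from $P_N$ to $Q_N$ in the word), and so has trace bounded by $\|T\|^{k-1}\|P_N T Q_N\|_1$. The second reduction is therefore the trace-norm estimate
\[
\|P_N T Q_N\|_1 = o(d_N) \quad \text{for every } T \in \mathcal{T}_d.
\]
For a word $W$ in the coordinate multiplications $M_{z_i}$ and their adjoints, which shifts the homogeneous degree by a fixed amount, the support of $P_N W Q_N$ has dimension $O(N^{d-1})$, while $d_N \sim N^d/d!$. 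For a general $T \in \mathcal{T}_d$, approximate to within $\epsilon$ in operator norm by a polynomial $T_\epsilon$ in the generators; since the compression $P_N(T-T_\epsilon)Q_N$ has rank at most $d_N$, its trace norm is at most $\epsilon\, d_N$, and the polynomial bound gives $\limsup \|P_N T Q_N\|_1 / d_N \leq \epsilon$.

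Granted these reductions, it remains to identify
\[
\tau(S) := \lim_{N\to\infty} \frac{1}{d_N}\text{Tr}(P_N S P_N) = \int_{\partial\mathbb{B}_d}\psi\,d\sigma
\]
for every $S\in\mathcal{T}_d$ with symbol $\psi$; applied to $S=T^k$, whose symbol is $\phi^k$ because the symbol map is a $*$-homomorphism, this completes the proof. Each $\tau_N(S) := \frac{1}{d_N}\text{Tr}(P_N S P_N)$ is a state on $\mathcal{T}_d$, so by weak-$*$ compactness the sequence has accumulation points. Any accumulation point $\tau$ vanishes on the ideal of compact operators (from $\text{Tr}(P_N K P_N)\to \text{Tr}(K)$ for finite-rank $K$, and norm approximation for general compacts) and hence descends to a state on $C(\partial\mathbb{B}_d)$. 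It is also $U(d)$-invariant, because the natural unitary on the Drury-Arveson space induced by $z\mapsto Uz$ preserves homogeneous components and therefore commutes with $P_N$; trace cyclicity then yields $\tau_N(\Gamma_U S\Gamma_U^*)=\tau_N(S)$. By uniqueness of the $U(d)$-invariant probability measure on $\partial\mathbb{B}_d$, the induced functional on $C(\partial\mathbb{B}_d)$ must equal $\int(\cdot)\,d\sigma$. Since every accumulation point coincides with this, the full sequence converges.

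The main obstacle I anticipate is the trace-norm estimate $\|P_N T Q_N\|_1 = o(d_N)$ for general $T\in\mathcal{T}_d$: the polynomial approximation step must trade operator-norm smallness for trace-norm smallness via the rank bound $\mathrm{rank}(P_N(T-T_\epsilon)Q_N) \le d_N$, which is easy but essential. Everything else then reduces to the symbol calculus $\mathcal{T}_d/\mathcal{K}\cong C(\partial\mathbb{B}_d)$ developed in Section~2 and the standard uniqueness of invariant probability measures on the homogeneous space $\partial\mathbb{B}_d \cong U(d)/U(d-1)$.
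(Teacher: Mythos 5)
Your proposal is correct, and it reaches the theorem by a genuinely different route than the paper. The paper verifies the two hypotheses of B\'edos' abstract criterion (Theorem \ref{eq20}): it proves the F\o lner condition for $\{P_N\}$ in the Hilbert--Schmidt norm (Proposition \ref{eq17}), and it identifies the limiting state $\chi(T)=\lim_N \frac{1}{d_N}\mathrm{Tr}(P_NTP_N)$ by an explicit moment computation --- a Chu--Vandermonde-type identity gives $\lim_N\chi_N(S^{\alpha*}S^{\alpha})=\frac{(d-1)!\,\alpha!}{(d-1+|\alpha|)!}$, which is matched against Rudin's formula for $\int_{\partial\mathbb{B}_d}|z^{\alpha}|^2\,d\sigma$. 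You instead make both halves self-contained: your trace-norm estimate $\|P_NTQ_N\|_1=o(d_N)$ (via the rank bound $O(N^{d-1})$ for words in the generators, plus the rank-$d_N$/operator-norm trade for the approximation error) is exactly the trace-norm form of the F\o lner condition that B\'edos notes is equivalent to the Hilbert--Schmidt form, and your $2^{k-1}$-term expansion of $P_NT^kP_N$ re-derives by hand the consequence $\frac{1}{d_N}\mathrm{Tr}\bigl(f(P_NTP_N)\bigr)-\frac{1}{d_N}\mathrm{Tr}\bigl(P_Nf(T)P_N\bigr)\to 0$ that the paper outsources to B\'edos. Your identification of the limit via $U(d)$-invariance of accumulation points and uniqueness of the invariant probability measure on $\partial\mathbb{B}_d\cong U(d)/U(d-1)$ is a clean replacement for the combinatorics; what you lose is the explicit value of $\chi(S^{\alpha*}S^{\alpha})$, what you gain is independence from the integration formula and from the external theorem. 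One small imprecision to repair: the spectrum of the compression $P_NTP_N$ lies in $[\min\mathrm{sp}(T),\max\mathrm{sp}(T)]$, which may strictly contain $[a,b]$ because a Toeplitz-like operator is only a compact perturbation of one with essential spectrum $[a,b]$ and can have eigenvalues outside the range of $\phi$; so extend $f$ continuously to that larger interval before invoking Weierstrass (the limit is unaffected since $\sigma\circ\phi^{-1}$ is supported in $[a,b]$). This caveat is inherited from the theorem statement itself and does not affect the validity of your argument.
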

\noindent Here, the measures $\mu_T^N$ will be defined analogously to in $(\ref{eq15})$.
\section{preliminaries}
The Hardy space on the unit disc has a classical generalization called the Drury-Arveson space. This is the space $H_d^2$ of analytic functions on $\mathbb{B}_d$ whose power series coefficents satisy a weighted $l^2$ condition. More precisely,
\begin{align*}
    H^2_d =\{f = \sum_{\alpha \in \mathbb{N}^d} a_{\alpha} z^{\alpha}: ||f||^2 = \sum_{\alpha \in \mathbb{N}^d} |a_{\alpha}|^2 < \infty \}
\end{align*}
The weighted monomials $\{\sqrt{\frac{|w|!}{w!}} z^w\}_w$ form the canonical orthonormal basis for $H_d^2$. For convenience, we denote by $e_w:= \sqrt{\frac{|w|!}{w!}} z^w$. \\

Let $S_i$ for $i \in {1,...,d}$ be the left shift operators on $H_d^2$. That is,
\begin{align*}
    S_i(f) = z_i f
\end{align*}The commutative d-shift is the tuple $S=(S_1,S_2,..,S_d)$. For $\alpha= (\alpha_1, \alpha_2,...,\alpha_d) \in \mathbb{N}^d$ we mean by $S^{\alpha}$ the operator $S_1^{\alpha_1}S_2^{\alpha_2}..S_d^{\alpha_d}$ (i.e., multiplication by $z^{\alpha}$). \\ 

Analogous to the classical Toeplitz algebra $\mathcal{T}$, we have the following:
\begin{definition} (\cite{Hartz2016})
The \textup{Toeplitz algebra} $\mathcal{T}_d$ is defined to be the unital $C^*$-algebra generated by $S$, that is,
\begin{align*}
\mathcal{T}_d = C^* (1,S) \subseteq B(H^2_d)
\end{align*}
\end{definition}
\begin{definition}
    By a \textup{Toeplitz-like operator}, we mean an element in $\mathcal{T}_d$. A \textup{positive Toeplitz-like operator} is a positive element in $\mathcal{T}_d$.
\end{definition}
We have the following theorem due to Arveson \cite{MR1668582}.
\begin{theorem}
Fix $d>0$ and denote the compact operators on $H_d^2$ by $K(H_d^2)$. Then,
   \begin{align}\label{eq4}
\mathcal{T}_d / K(H_d^2) \simeq C(\partial \mathbb{B}_d)
\end{align}
given by sending $S_i$ to $z_i$ on $C(\partial \mathbb{B}_d)$. Thus there exists a short exact sequence,
\[
\begin{tikzcd}
    0\arrow[r] & K(H_d^2) \arrow[r, ""] & \mathcal{T}_d \arrow[r, "\pi"] & C(\partial \mathbb{B}_d)\arrow[r] & 0.
\end{tikzcd}
\]
\end{theorem}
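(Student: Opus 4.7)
The plan is to argue in three stages. Stage (i): show every commutator $[S_j, S_i^*]$ is compact, from which it follows that in the Calkin algebra $B(H_d^2)/K(H_d^2)$ the images of $S_1,\dots,S_d$ generate a commutative unital $C^*$-subalgebra, isomorphic by Gelfand theory to $C(X)$ for $X$ the joint spectrum of $(\pi(S_1),\dots,\pi(S_d))$ in $\mathbb{C}^d$. Stage (ii): identify $X$ with $\partial \mathbb{B}_d$ via normalized reproducing kernels. Stage (iii): verify $K(H_d^2) \subseteq \mathcal{T}_d$, which then upgrades the surjection to the stated short exact sequence with $\pi$ sending $S_i \mapsto z_i$.

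For (i), I would first compute the action on the basis $e_w$ to obtain
\[
S_i e_w = \sqrt{\tfrac{w_i+1}{|w|+1}}\, e_{w+\varepsilon_i}, \qquad S_i^* e_w = \sqrt{\tfrac{w_i}{|w|}}\, e_{w-\varepsilon_i},
\]
with $\varepsilon_i$ the $i$-th standard basis vector (and $S_i^* e_w = 0$ when $w_i=0$). A direct calculation then gives, for $i\neq j$, the weighted shift $[S_j, S_i^*]\, e_w = \tfrac{\sqrt{w_i(w_j+1)}}{|w|(|w|+1)}\, e_{w+\varepsilon_j-\varepsilon_i}$, while for $i=j$ one gets the diagonal operator with eigenvalues $(w_i-|w|)/(|w|(|w|+1))$. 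In either case the coefficients are $O(1/|w|)$, so each commutator is a norm limit of its finite-rank truncations to $\{|w|\le N\}$ and hence compact. Consequently the images of $S_i$ and $S_j^*$ commute in the Calkin algebra, and the $C^*$-subalgebra they generate (the image of $\mathcal{T}_d$) is commutative and, by Gelfand theory, isomorphic to $C(X)$ for $X$ the joint spectrum of $(\pi(S_1),\dots,\pi(S_d))$.

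For (ii), the same basis computation yields $\sum_i S_i S_i^* = I - P_0$ with $P_0$ the rank-one projection onto the constants, so in the Calkin algebra $\sum_i \pi(S_i)\pi(S_i)^* = 1$. Thus at every character $\chi$ of $C(X)$ the point $(\chi(\pi(S_1)),\dots,\chi(\pi(S_d)))$ lies on $\partial \mathbb{B}_d$, giving $X \subseteq \partial \mathbb{B}_d$. For the reverse inclusion I would use the reproducing kernel $k_w(z) = (1 - \langle z, w\rangle)^{-1}$ on $\mathbb{B}_d$: the standard identity $S_i^* k_w = \overline{w_i}\, k_w$ makes the normalized kernel $\hat k_{r\zeta}$ into an approximate joint eigenvector of $(S_1^*,\dots,S_d^*)$ with approximate eigenvalue $\overline\zeta$ as $r \to 1^-$, and for every compact $K$ one has $\omega_r(K) := \langle K \hat k_{r\zeta}, \hat k_{r\zeta}\rangle \to 0$ because $\hat k_{r\zeta} \to 0$ weakly. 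Any weak-$*$ cluster point of the states $\omega_r$ on $\mathcal{T}_d$ therefore descends to a state on the commutative quotient $C(X)$, is hence a character, and by the approximate eigenvector property sends $\pi(S_i)$ to $\zeta_i$; so $\zeta \in X$. This yields the homeomorphism $X \cong \partial \mathbb{B}_d$ and identifies $\pi(S_i)$ with the coordinate function $z_i$.

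For (iii), the element $P_0 = I - \sum_i S_i S_i^*$ lies in $\mathcal{T}_d$ and is a non-zero compact operator. The algebra $\mathcal{T}_d$ acts irreducibly on $H_d^2$: any closed subspace invariant under all $S_i$ and all $S_i^*$ must contain the constant $1$ (apply suitable $S^*$'s to reduce the degree of any non-zero vector) and hence every monomial (by applying $S$'s), so it equals $H_d^2$. The standard fact that an irreducible $C^*$-subalgebra of $B(H)$ containing a non-zero compact operator contains all of $K(H)$ then gives $K(H_d^2) \subseteq \mathcal{T}_d$, so the surjection onto $C(\partial \mathbb{B}_d)$ from (ii) has kernel exactly $K(H_d^2)$, completing the short exact sequence. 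The main obstacle I expect is stage (ii), in particular showing that the reproducing-kernel states $\omega_r$ cluster to a genuine character at every $\zeta \in \partial \mathbb{B}_d$ and that the map $\zeta \mapsto \chi_\zeta$ is a homeomorphism onto the Gelfand spectrum. The commutator estimates from (i) are essential here: they allow any $*$-polynomial in $S_1,\dots,S_d$ to be reordered modulo compacts into normally-ordered form $S^\alpha (S^*)^\beta$, on which $\omega_r$ evaluates explicitly as $r^{|\alpha|+|\beta|}\zeta^\alpha\overline\zeta^\beta$ and has a definite limit as $r \to 1$.
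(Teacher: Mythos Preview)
The paper does not actually prove this theorem; it is stated as a known result due to Arveson with a citation to \cite{MR1668582}, and no argument is supplied. There is therefore nothing in the paper to compare your proposal against.

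On its own merits your three-stage outline is a correct and standard route to the result. The commutator computations in stage~(i) are right (the off-diagonal coefficients are $O(1/|w|)$), the row-contraction identity $\sum_i S_iS_i^*=I-P_0$ gives the inclusion $X\subseteq\partial\mathbb{B}_d$, and the irreducibility argument in stage~(iii) is the usual one. One slip to flag in stage~(ii): you write that a weak-$*$ cluster point of the $\omega_r$ ``descends to a state on the commutative quotient $C(X)$, is hence a character.'' That inference is false in general---states on a commutative $C^*$-algebra are probability measures, not automatically point evaluations. Your closing remark is what actually repairs this: using the compactness of the commutators to reorder any $*$-monomial into the form $S^\alpha(S^*)^\beta$ modulo compacts, together with $\omega_r(S^\alpha(S^*)^\beta)=r^{|\alpha|+|\beta|}\zeta^\alpha\overline{\zeta}^\beta\to\zeta^\alpha\overline{\zeta}^\beta$, shows the limit state is multiplicative on a dense $*$-subalgebra and hence is the evaluation character at $\zeta$. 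With that clarification the argument closes, and the resulting proof is along the lines of Arveson's original, though he organizes the essential normality and spectrum identification somewhat differently.
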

\begin{remark}
    This short exact sequence means that the commutator $[S,S^*]$ is a compact operator since its image is zero in $C(\partial \mathbb{B}_d)$. Thus, finite sums of the form $\sum_{i} p_i(S)^* q_i(S) +K$, where $p_i$ and $q_i$ are polynomials in $S$ and $K$ is a compact operator on $H_d^2$, form a dense set in $\mathcal{T}_d$.
\end{remark}
\begin{definition}
    The image of a Toeplitz operator $T$ under the map $\pi$ in ($\ref{eq4}$) will be called the continuous \textup{symbol} of the Toeplitz-like operator.
\end{definition}
For the projections, we choose the sequence of subspaces $H_N =$ span$\{\sqrt{\frac{|w|!}{w!}} z^w: |w|\leq N\}$ of $H_d^2$ and the corresponding projections $P_N$ onto these subspaces. A simple calculation shows that $rank(P_N) = \sum_{j=0}^{N} {j+d-1 \choose d-1}$. \\

To prove the main theorem, we will make use of ideas of Arveson \cite{MR1276162} and Bédos \cite{MR1458766}, who developed generalizations of the Szeg\H{o} theorem to a more abstract $C^*-$algebra context. Rather than considering a single self-adjoint operator, a concretely presented $C^*$-algebra of operators is considered. In \cite{MR1276162} Arveson proves a generalization for $C^*$-algebras with a unique tracial state. However, this is not the case in our setting. Instead, we use Bédos' approach \cite{MR1458766} for general $C^*$-algebras.\\

Let $H$ be a complex Hilbert space, and let $\{P_N\}$ be a sequence of finite-rank projections in $B(H)$ and $\mathcal{A}\subseteq B(H)$ be a unital $C^*$subalgebra. Let $Tr$ stand for the canonical trace on $B(H)$ and let $||T||_2 = (Tr(T^*T))^{\frac{1}{2}}$ be the Hilbert-Schmidt norm on $B(H)$.\\

A sequence of projections will be called a \textit{Følner sequence} if it satisfies one of the following equivalent conditions.
\begin{lemma}[\cite{MR1458766}, Lemma 1]
    The following conditions are equivalent:
    \begin{enumerate}
        \item $\lim_{N} \frac{||AP_N-P_NA||_2}{||P_N||_2}=0$ for all $A \in \mathcal{A} $ \\
        \item $\lim_{N} \frac{||(I-P_N)AP_N||_2}{||P_N||_2}=0$ for all  $A \in \mathcal{A} $ 
    \end{enumerate}
\end{lemma}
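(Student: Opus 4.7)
The plan is to exploit an orthogonality property of the Hilbert--Schmidt inner product that splits the commutator $[A, P_N] = AP_N - P_N A$ into two pieces whose norms can be analyzed separately. First, inserting $I = P_N + (I - P_N)$ in the appropriate places, I would write
\begin{align*}
    AP_N - P_N A = (I-P_N) A P_N - P_N A (I-P_N).
\end{align*}
The key observation is that these two summands are orthogonal with respect to the Hilbert--Schmidt inner product $\langle X, Y\rangle_{HS} = Tr(XY^*)$:
\begin{align*}
    \langle (I-P_N) A P_N,\ P_N A (I-P_N) \rangle_{HS} = Tr\bigl((I-P_N) A P_N (I-P_N) A^* P_N\bigr) = 0,
\end{align*}
using $P_N(I-P_N) = 0$. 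This yields the Pythagorean identity
\begin{align*}
    \|AP_N - P_N A\|_2^2 = \|(I-P_N) A P_N\|_2^2 + \|P_N A (I-P_N)\|_2^2.
\end{align*}

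From this identity, the direction (1) $\Rightarrow$ (2) is immediate, since each summand on the right is bounded above by $\|[A, P_N]\|_2^2$, so dividing by $\|P_N\|_2^2$ and applying (1) forces $\|(I-P_N)AP_N\|_2/\|P_N\|_2 \to 0$. For the reverse direction (2) $\Rightarrow$ (1), I would use that the Hilbert--Schmidt norm is adjoint-invariant, so that
\begin{align*}
    \|P_N A (I-P_N)\|_2 = \|(I-P_N) A^* P_N\|_2.
\end{align*}
Because $\mathcal{A}$ is a $*$-subalgebra, $A^* \in \mathcal{A}$ whenever $A \in \mathcal{A}$, so applying hypothesis (2) to $A^*$ controls the second summand. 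Both pieces of the Pythagorean identity then go to zero after dividing by $\|P_N\|_2$, giving (1).

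The argument is essentially elementary once the orthogonal decomposition is noticed; there is no serious obstacle to expect. The only substantive points are the Hilbert--Schmidt orthogonality of the two ``off-diagonal'' pieces of $[A, P_N]$, and the use of the $*$-algebra structure of $\mathcal{A}$ to pass between $A$ and $A^*$ when converting (2) to a statement about $P_N A (I - P_N)$.
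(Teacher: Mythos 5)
Your proof is correct, and the paper itself does not prove this lemma (it is imported from B\'edos); your argument --- the orthogonal decomposition $AP_N - P_NA = (I-P_N)AP_N - P_NA(I-P_N)$, the resulting Pythagorean identity for the Hilbert--Schmidt norm, and the use of $A^*\in\mathcal{A}$ to control the second summand --- is exactly the standard one. No gaps.
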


Bédos shows in \cite{MR1458766} that the Hilbert-Schmidt norm above can be equivalently replaced by the trace norm of the operator.\\

 Each state $\chi$ on a $C^*$-algebra $\mathcal{A}$ gives rise to a GNS-triple $(\pi_{\chi}, H_{\chi}, \xi_{\chi})$ associated with $(\mathcal{A}, \chi)$. Denoting $E$ to be the projection-valued spectral measure of the self-adjoint operator $\pi_{\chi}(A)$ we get a Borel probability measure $\mu_A^{\chi}$ defined as:
\begin{align*}
    \mu_A^{\chi}(B) = \langle E(B) \xi_{\chi}, \xi_{\chi} \rangle \ \ \ (B \subseteq \mathbb{R}, B \ \text{Borel})
\end{align*}
A general notion of convergence of spectral distributions in the $C^*-$algebra setting is given by the following:
\begin{definition} [\cite{MR1458766}]
   $\{\{P_N\}, \chi\}$ is a \normalfont{Szeg\H{o} pair} for $\mathcal{A}$ if for all self-adjoint $A \in \mathcal{A}$, $\mu_A^N$ converges weakly to $\mu_A^{\chi}$. In other words, if $A$ is self-adjoint then for all $g \in C(sp(A))$,
\begin{align*}
\lim_{N \rightarrow \infty} \int_{\mathbb{R}} g d \mu_{A}^N = \int_{\mathbb{R}} g d \mu_{A}^{\chi}.
\end{align*} 
\end{definition}

Then a necessary and sufficient condition on $P_N$ and $\chi$ for $\{\{P_N\}, \chi\}$ to be a Szeg\H{o} pair is given by the following theorem. 
\begin{theorem}[\cite{MR1458766}, Theorem 6] \label{eq20}
    The pair $\{\{P_N\}, \chi \}$ is a Szeg\H{o} pair if and only if the following conditions hold:
    \begin{enumerate}[label=(\alph*)]
    \item $\chi$ is the "trace per unit volume w.r.t $\{P_N\}$", i.e, $\chi(A) = \lim_{N \rightarrow \infty} \frac{Tr(P_N A)}{rank(P_N)}$ for all $A \in \mathcal{A}$.
    \item $\{P_N\}$ is a Følner sequence for $\mathcal{A}$.
    \end{enumerate}
\end{theorem}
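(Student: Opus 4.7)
The plan is to reduce the weak convergence $\mu_A^N \to \mu_A^\chi$ to convergence of polynomial moments, and then to convert moments of the compressions $P_N A P_N$ into moments of $A$ itself, up to a Hilbert--Schmidt error controlled by the Følner property.

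First, since both $\mu_A^N$ and $\mu_A^\chi$ are Borel probability measures supported in the compact interval $[-||A||,||A||]$, Stone--Weierstrass shows the Szegő pair condition is equivalent to convergence of every polynomial moment: for every self-adjoint $A \in \mathcal{A}$ and every $k \geq 0$,
\begin{equation*}
\frac{1}{d_N} Tr\bigl( (P_N A P_N)^k \bigr) \longrightarrow \chi(A^k),
\end{equation*}
where $d_N = rank(P_N)$ and I use that $\int x^k \, d\mu_A^\chi = \langle \pi_\chi(A)^k \xi_\chi, \xi_\chi \rangle = \chi(A^k)$ from the GNS construction. The proof then reduces to comparing these polynomial moments under the two hypotheses.

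For $(a) + (b) \Rightarrow$ Szegő, I would write $Q = I - P_N$ and expand
\begin{equation*}
P_N A^k P_N - (P_N A P_N)^k = \sum_{\epsilon \in \{0,1\}^{k-1} \setminus \{0\}} P_N A X_{\epsilon_1} A X_{\epsilon_2} \cdots X_{\epsilon_{k-1}} A P_N,
\end{equation*}
with $X_0 = P_N$, $X_1 = Q$, obtained by inserting $I = P_N + Q$ at each of the $k-1$ interior slots. For each error term $T_\epsilon$, let $j$ be the smallest index with $\epsilon_j = 1$; since all earlier $X$'s are $P_N$, the term factors as $(P_N A)^{j-1} \cdot (P_N A Q) \cdot (A X_{\epsilon_{j+1}} \cdots A P_N)$. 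Applying $|Tr(BC)| \leq ||B||_2 \, ||C||_2$ with $||ST||_2 \leq ||S||_{op} \, ||T||_2$ and $||P_N||_2 = \sqrt{d_N}$, one obtains
\begin{equation*}
\frac{|Tr(T_\epsilon)|}{d_N} \;\leq\; ||A||^{k-1} \cdot \frac{||P_N A Q||_2}{\sqrt{d_N}} \;\longrightarrow\; 0
\end{equation*}
by (b) applied to $A^* \in \mathcal{A}$. Summing the finitely many error terms and combining with (a) applied to $A^k$, which gives $d_N^{-1} Tr(P_N A^k P_N) \to \chi(A^k)$, yields the required moment convergence.

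For the converse, Szegő $\Rightarrow (a)$ follows immediately by taking $k=1$ in the moment criterion (for self-adjoint $A$, then extended to all $A$ by writing $A = A_1 + i A_2$ and using linearity of both $\chi$ and the limit). For $(b)$, with $A$ self-adjoint I would use the identity
\begin{equation*}
||Q A P_N||_2^2 = Tr(P_N A Q A P_N) = Tr(P_N A^2 P_N) - Tr\bigl((P_N A P_N)^2\bigr).
\end{equation*}
Applying the moment criterion to $A^2$ at $k=1$ and to $A$ at $k=2$, both traces divided by $d_N$ tend to $\chi(A^2)$, so $||Q A P_N||_2^2 / d_N \to 0$; the general case reduces to the self-adjoint one by taking real and imaginary parts. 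The main technical obstacle is the combinatorial expansion in the forward direction: every non-trivial $T_\epsilon$ must be split so as to expose a single $P_N A Q$ block (the carrier of Følner smallness), with the remaining factors bounded in operator norm by $||A||^{k-1}$. Choosing $j$ to be the \emph{first} index with $\epsilon_j = 1$ is what makes this work, since then the left factor $(P_N A)^{j-1}$ has only $P_N$'s between its $A$'s; any other choice could leave an internal $Q A Q$ block that is not Følner-small.
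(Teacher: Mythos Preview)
The paper does not supply its own proof of this theorem: it is quoted verbatim as Theorem~6 of B\'edos \cite{MR1458766} and used as a black box in the proof of Theorem~\ref{eq10}. There is therefore no in-paper argument to compare your proposal against.

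That said, your argument is correct and is essentially the standard proof one finds in B\'edos' paper. The reduction to polynomial moments via Stone--Weierstrass is the right first step, since all measures in play are supported in $[-\|A\|,\|A\|]$. In the forward direction your expansion of $P_N A^k P_N - (P_N A P_N)^k$ via $I = P_N + Q$ and the estimate
\[
\frac{|Tr(T_\epsilon)|}{d_N} \;\leq\; \|A\|^{k-1}\,\frac{\|P_N A Q\|_2}{\sqrt{d_N}}
\]
are valid; the choice of the \emph{first} index $j$ with $\epsilon_j=1$ is indeed what isolates a single $P_N A Q$ factor while keeping the remaining factors bounded in operator norm, and the appeal to the F\o lner condition for $A^*$ handles the adjoint. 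In the converse direction, the identity
\[
\|Q A P_N\|_2^2 = Tr(P_N A^2 P_N) - Tr\bigl((P_N A P_N)^2\bigr)
\]
together with the $k=1$ moment for $A^2$ and the $k=2$ moment for $A$ is exactly the right trick, and the extension from self-adjoint to general $A$ by real/imaginary parts is routine. No gaps.
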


Szeg\H{o}'s theorem can now be restated as:
Let $\mathbb{T}$ denote the circle group with normalized Haar measure $m$, $P_n$ the orthogonal projection from $L^2(\mathbb{T})$ onto the linear span of $\{\epsilon_k; k =0,1,..,n\}$ in $L^2(\mathbb{T})$ where $\epsilon_k(z) = z^k (z \in \mathbb{T})$ for each $k \in \mathbb{N}$, and $M: L^{\infty}(\mathbb{T}) \rightarrow B(L^2(\mathbb{T}))$ is the representation of $L^{\infty}(\mathbb{T})$ as multiplication operators on $L^2(\mathbb{T})$. Then, $\{\{P_n\}, \bar{m}\}$ is a Szeg\H{o} pair for $M(L^{\infty}(\mathbb{T})), \bar{m}$ denoting the (faithful) tracial state on $M(L^{\infty}(\mathbb{T}))$ obtained by transporting $m$ via $M$.\\

In essence, the normalized eigenvalue distributions of the truncations of the Toeplitz operator converge weakly to the push-forward of the normalized Lebesgue measure on $\mathbb{T}$ under $\phi$.\\



\section{main result}
Our main theorem will follow from a sequence of propositions.
\begin{proposition}\label{eq17}
    $\{P_N\}$ is a Følner sequence for the $C^*$-algebra $\mathcal{T}_d$ generated by the shift operators $S_i$.
\end{proposition}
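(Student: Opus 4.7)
The plan is to verify the Følner condition in its form (2) from the Lemma cited above, namely $\|(I - P_N) A P_N\|_2 / \|P_N\|_2 \to 0$ for every $A \in \mathcal{T}_d$. The set $\mathcal{F}$ of operators satisfying this pointwise estimate is closed under sums, scalar multiples, products (via the Leibniz-type bound $\|(I-P_N)AB P_N\|_2 \leq \|A\|\,\|(I-P_N)BP_N\|_2 + \|(I-P_N)AP_N\|_2\,\|B\|$), and operator-norm limits (via the crude inequality $\|(I-P_N)AP_N\|_2 \leq \|A\|\,\|P_N\|_2$ together with a routine $\varepsilon/3$-argument). Consequently, it suffices to verify (2) on each generator $S_i$ and on each adjoint $S_i^*$, since the $*$-algebra they generate is norm-dense in $\mathcal{T}_d$.

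The adjoint case is immediate: $S_i^*$ sends each $H_N$ into $H_{N-1} \subseteq H_N$, so $(I-P_N) S_i^* P_N = 0$ identically. For $S_i$ itself, a short calculation on the orthonormal basis gives
\begin{align*}
S_i e_w \;=\; \sqrt{\tfrac{w_i+1}{|w|+1}}\; e_{w+\delta_i},
\end{align*}
where $\delta_i \in \mathbb{N}^d$ is the multi-index with a $1$ in position $i$ and $0$ elsewhere. Since $S_i$ sends $H_{N-1}$ into $H_N$, only basis vectors with $|w|=N$ contribute to $(I-P_N) S_i P_N$, yielding
\begin{align*}
\|(I-P_N) S_i P_N\|_2^2 \;=\; \frac{1}{N+1} \sum_{|w|=N} (w_i + 1).
\end{align*}

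The rest is a brief multi-index computation. By permutation symmetry of the sum over multi-indices with $|w|=N$, we have $\sum_{|w|=N} w_i = \frac{N}{d}\binom{N+d-1}{d-1}$, and therefore $\sum_{|w|=N}(w_i+1) = \frac{N+d}{d}\binom{N+d-1}{d-1}$. Combining this with $\|P_N\|_2^2 = \mathrm{rank}(P_N) = \binom{N+d}{d} = \frac{N+d}{d}\binom{N+d-1}{d-1}$, the ratio telescopes to
\begin{align*}
\frac{\|(I-P_N) S_i P_N\|_2^2}{\|P_N\|_2^2} \;=\; \frac{1}{N+1} \;\longrightarrow\; 0.
\end{align*}
I do not anticipate a substantive obstacle; the only mild delicacy is the closure of $\mathcal{F}$ under norm limits, and this works precisely because the growing factor $\|P_N\|_2$ appears on both sides of the crude bound and cancels in the normalized quotient.
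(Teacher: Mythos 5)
Your proof is correct and follows essentially the same strategy as the paper: verify the Følner condition on the generators by a direct computation over the orthonormal basis $\{e_w\}$, then extend to all of $\mathcal{T}_d$ via closure under sums, products, and operator-norm limits together with density. The only (harmless) differences are that you work with the equivalent form $\|(I-P_N)AP_N\|_2/\|P_N\|_2$, which makes the $S_i^*$ case trivial, and that you evaluate the ratio exactly as $1/(N+1)$ where the paper settles for an $O(N^{d-1})/O(N^{d})$ upper bound.
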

\begin{proof}
    For notational convenience let us define, for $T \in \mathcal{T}_d$,
    \begin{align*}
    \tau_N(T) : = \frac{||P_N T - T P_N||_2}{||P_N||_2}
    \end{align*}
    We need to show $\lim_{N \rightarrow \infty} \tau_N(T)=0$ for all $T \in \mathcal{T}_d$. \\
    
    Consider the shift operators $S_i, i \in {1,..,d}$. We observe that,
    \begin{align*}
   S_i P_N - P_N S_i= (P_{N+1} -P_N) S_i P_N
    \end{align*}
    So, for orthonormal basis elements $e^{\alpha} = \sqrt{\frac{|\alpha|!}{\alpha!}} z^{\alpha}$, we have,
    \begin{align*}
    || S_i P_N-P_N S_i||_2^2 & = ||(P_{N+1}-P_N) S_i P_N||_2^2\\
    & = \sum_{|\alpha| \leq N} \langle (P_{N+1} - P_N)S_i P_N e^{\alpha}, (P_{N+1}-P_N)S_i P_N e^{\alpha}\rangle\\
    & = \sum_{|\alpha| \leq N} \langle(P_{N+1}-P_N)z_i e^{\alpha}, (P_{N+1}- P_N) z_i e^{\alpha}\rangle\\
    & = \sum_{|\alpha|=N} ||z_i e^{\alpha}||^2\\
    & = \sum_{|\alpha|=N} \frac{|\alpha|!}{\alpha !} \ \ \frac{\alpha_1! \alpha_2! \cdots \alpha_i+1! \cdots \alpha_d !}{(|\alpha|+1)!} \\
    & = \sum_{|\alpha|=N} \frac{\alpha_i+1}{|\alpha|+1}\\
    & = \sum_{|\alpha|=N} \frac{\alpha_i+1}{N+1}\\
    & \leq \sum_{|\alpha|=N} 1\\
    & = \binom{N+d-1}{d-1}
    \end{align*}
    To prove the Følner condition we observe that $||P_N|| = rank(P_N) = {\sum_{j=0}^N \binom{j+d-1}{d-1}}$, so that by above,
    \begin{align*}
    \frac{||P_N S_i - S_i P_N||_2^2}{||P_N||_2^2} \leq \frac{ \binom{N+d-1}{d-1}}{\sum_{j=0}^N \binom{j+d-1}{d-1}}
    \end{align*}
    As the numerator is of the order of $N^{d-1}$ and the denominator is of the order of $N^{d}$ the above ratio goes to zero as $N$ goes to infinity.\\

    The limit is zero again if we replace $S_i$ with $S_i^*$ for each $i \in \{1,2,..,d\}$.\\
    
So now it suffices to show that, 
\begin{enumerate}[label=(\alph*)]
\item for $A, B \in \mathcal{T}_d$, such that $\lim_{N\rightarrow \infty} \tau_N(A)=0$ and $\lim_{N\rightarrow \infty} \tau_N(B)=0$, then $\lim_{N \rightarrow \infty} \tau_N(AB)=0$.
\item for $A, B \in \mathcal{T}_d$ such that $\lim_{N\rightarrow \infty} \tau_N(A)=0$ and $\lim_{N\rightarrow \infty} \tau_N(B)=0$ then, $\lim_{N \rightarrow \infty} \tau_N(A+B)=0$.
\item if $\lim_{N\rightarrow \infty} \tau_N(A)=0$ for $A$ in a dense set in $\mathcal{T}_d$, then the limit is zero for every element in $\mathcal{T}_d$.
\end{enumerate}
Recall that for Hilbert-Schmidt norm we have, $||AB||_2 \leq ||A||_2 ||B||$. To prove $(a)$ we see that,
\begin{align*}
    \tau_N(AB) &=  \frac{||P_N AB - AB P_N||_2}{||P_N||_2}\\
    & = \frac{||P_N AB - AP_N B + AP_NB- AB P_N||_2}{||P_N||_2}\\
    & \leq \frac{||P_N AB - AP_N B||_2}{||P_N||_2} + \frac{||AP_N B - ABP_N ||_2}{||P_N||_2}\\
    &  \leq \tau_N(A) ||B|| + \tau_N(B) ||A||
\end{align*}\\
Thus, $\lim_{N \rightarrow \infty} \tau_N(AB)=0$.\\

To prove $(b)$, note that,
\begin{align*}
\tau_N(A+B) & =  \frac{||P_N (A+B) - (A+B) P_N||_2}{||P_N||_2}\\
& =  \frac{||P_N A+ P_NB - A P_N - B P_N||_2}{||P_N||_2}\\
& \leq \tau_N(A) + \tau_N(B)
\end{align*}
and hence, $\lim_{N \rightarrow \infty} \tau_N(A+B)=0$.\\

Now for $(c)$, let $B \in \mathcal{T}_d$. Given $\epsilon >0$, choose $A$ in the dense set such that $||A-B|| < \epsilon$ (note that the norm here is the operator norm). Then,
\begin{align*}
    \tau_N(B) & = \frac{||P_N B - B P_N||_2}{||P_N||_2}\\
    & = \frac{||P_N (B-A+A) -(B-A+A ) P_N||_2}{||P_N||_2}\\
    & \leq \frac{||P_N(B-A) - (B-A)P_N||_2}{||P_N||_2} + \tau_N(A)\\
    & \leq 2 ||(B-A)|| + \tau_N(A)
\end{align*}
Thus, $\lim_{N \rightarrow \infty} \tau_N(B)=0$. This implies $\{P_N\}$ forms a Følner sequence for $\mathcal{T}_d$.
\end{proof}
\begin{definition}
Define, for $T \in B(H_d^2)$,
\begin{align*}
\chi_N (T) : = \frac{Tr(P_NTP_N)}{rank(P_N)}
\end{align*}
\end{definition}
\begin{remark}
$\chi_N$ is a positive linear functional on $B(H^2_d)$. Also, $\chi_N(I)=1$ and $|Tr(P_NT P_N)| \leq ||T|| rank(P_N)$. Thus, each $\chi_N$ is a state on $B(H)$.  In particular, $||\chi_N||= 1$ for each $N$.
\end{remark}
\begin{proposition}\label{eq19}
For a compact operator $K \in B(H_d^2),  \lim_{N\rightarrow \infty} \chi_N(K)=0$
\end{proposition}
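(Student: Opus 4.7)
The plan is the standard two-step reduction: first use linearity and density to reduce to the case of rank-one operators, and then exploit the fact that each $\chi_N$ is a state while $rank(P_N) \to \infty$.

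To set up the reduction, I would first record that by the preceding remark each $\chi_N$ is a state on $B(H_d^2)$, so $|\chi_N(T)| \leq \|T\|$ for all $T \in B(H_d^2)$. I would also note that $rank(P_N) = \sum_{j=0}^N \binom{j+d-1}{d-1} \to \infty$, which is immediate from the explicit formula given in Section 2.

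Next I would dispatch the rank-one case. For $x, y \in H_d^2$ let $K$ be the rank-one operator $Kz = \langle z, y\rangle x$. Then $P_N K P_N$ is rank-one with range spanned by $P_N x$, and a one-line calculation gives $Tr(P_N K P_N) = \langle P_N x, P_N y\rangle$, which is bounded in absolute value by $\|x\|\|y\|$ uniformly in $N$. Dividing by $rank(P_N)$ therefore forces $\chi_N(K) \to 0$. By linearity this extends immediately to $\chi_N(F) \to 0$ for every finite-rank $F \in B(H_d^2)$.

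Finally, for a general compact $K$, I would use the fact that finite-rank operators are norm-dense in the compact operators: given $\epsilon > 0$ choose a finite-rank $F$ with $\|K - F\| < \epsilon$, and then
\[
|\chi_N(K)| \leq |\chi_N(F)| + |\chi_N(K-F)| \leq |\chi_N(F)| + \|K - F\| < |\chi_N(F)| + \epsilon,
\]
so $\limsup_N |\chi_N(K)| \leq \epsilon$, and letting $\epsilon \to 0$ concludes. There is no genuine obstacle in this argument; the only point that requires a moment's care is keeping the bound $\|\chi_N\| = 1$ uniform in $N$ when passing from the finite-rank approximation to the general compact $K$, and this is exactly what the state property of $\chi_N$ supplies.
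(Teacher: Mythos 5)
Your proof is correct and follows essentially the same route as the paper: establish the claim for finite-rank operators by noting the numerator $Tr(P_NFP_N)$ stays bounded while $rank(P_N)\to\infty$, then pass to general compact operators by density of finite-rank operators together with the uniform bound $\|\chi_N\|=1$. The only cosmetic difference is that you bound the finite-rank case via an explicit rank-one computation and linearity, whereas the paper bounds $|Tr(P_NFP_N)|$ directly by the trace norm $\|F\|_1$; both work equally well.
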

\begin{proof}
We begin by noting that for a finite rank operator $F$, since it is in the trace class of $B(H^2_d)$, $||F||_1 < \infty$ and,
\begin{align}
    \frac{Tr(P_NFP_N)}{rank(P_N)} \leq \frac{||F||_1 ||P||}{||P||_2}  = \frac{||F||_1}{||P||_2} 
\end{align}
and so,
\begin{align}\label{eq3}
    \lim_{N \rightarrow \infty} \chi_N(F)=0.
\end{align}

We also know that finite rank operators are dense in the space of compact operators. Thus, given a compact operator $K$ and an $\epsilon>0$, we choose a finite rank operator $F$ such that $||K-F||< \epsilon$. Now, ($\ref{eq3}$) allows us to choose $N$ big enough so that $|\chi_N(F)|<\epsilon$ for all $N >N'$. So,
\begin{align*}
   | \chi_N(K) | &= |\chi_N(K) - \chi_N(F) +\chi_N(F)|\\ 
   & \leq |\chi_N(K) - \chi_N(F)| + |\chi_N(F)|\\
   & \leq ||\chi_N|| \ ||K-F|| + \epsilon\\
   & \leq 2 \epsilon
\end{align*}
for all $N> N'$ and the desired result follows.
\end{proof}

\begin{proposition}
If $\alpha$ and $\beta$ are multi-indices and $\alpha \neq \beta$ then,
\begin{align*}
\lim_{N\rightarrow \infty} \chi_N({S^{\alpha}}^* S^{\beta})= 0
\end{align*}
\begin{proof}
Note that, for $\alpha \neq \beta$ and each $N\geq 0$,
\begin{align*}
Tr(P_N{S^{\alpha}}^* S^{\beta} P_N) &= \sum_{|w|\leq N } \langle {S^{\alpha}}^* S^{\beta}z^w, z^w\rangle\\
&= \sum_{|w|\leq N}\langle S^{\beta}z^w, S^{\alpha}z^w\rangle\\
& = 0,
\end{align*}
and the result follows.
\end{proof}
\end{proposition}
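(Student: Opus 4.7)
The plan is to prove the stronger claim that $Tr(P_N {S^\alpha}^* S^\beta P_N) = 0$ for every $N$, from which the limit statement follows at once. First I would expand the trace with respect to the orthonormal basis $\{e_w : |w| \leq N\}$ of the range of $P_N$, giving
\begin{align*}
Tr(P_N {S^\alpha}^* S^\beta P_N) = \sum_{|w| \leq N} \langle {S^\alpha}^* S^\beta e_w, e_w \rangle = \sum_{|w| \leq N} \langle S^\beta e_w, S^\alpha e_w \rangle.
\end{align*}

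Next I would observe that $S^\gamma e_w = z^\gamma e_w$ is a scalar multiple of the monomial $z^{w+\gamma}$, and hence a scalar multiple of the orthonormal basis vector $e_{w+\gamma}$. Therefore $S^\beta e_w \in \mathbb{C}\, e_{w+\beta}$ and $S^\alpha e_w \in \mathbb{C}\, e_{w+\alpha}$. Because $\alpha \neq \beta$ forces $w+\alpha \neq w+\beta$, the vectors $e_{w+\alpha}$ and $e_{w+\beta}$ are distinct members of the canonical orthonormal basis and are thus orthogonal. Each summand above therefore vanishes.

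Consequently $Tr(P_N {S^\alpha}^* S^\beta P_N) = 0$ for every $N$, so $\chi_N({S^\alpha}^* S^\beta) = 0$ identically in $N$ and the limit is trivially zero. I do not anticipate any real obstacle; the only point that deserves a little care is to use a genuine orthonormal basis $\{e_w\}$ when computing the trace, so that the weighting factors $\sqrt{|w|!/w!}$ drop out automatically via bilinearity rather than having to be tracked explicitly.
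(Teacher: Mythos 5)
Your proposal is correct and follows essentially the same route as the paper: expand the trace over the basis indexed by $|w|\leq N$, move $S^\beta$ and $S^\alpha$ to opposite sides of the inner product, and use orthogonality of the distinct monomials $z^{w+\beta}$ and $z^{w+\alpha}$. Your version is slightly more careful in using the normalized basis vectors $e_w$ (the paper writes $z^w$, which only affects positive scalar factors), but the argument is the same.
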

We require the following variant of the Chu-Vandermonde identity.
\begin{lemma}\label{eq16}
( \cite{MR4020393}, Corollary 5.4) (Chu-Vandermonde-type identity) Let $k_1,k_2,j,w_1, w_2 \geq 0$. Then,
\begin{align*}
\sum_{|w|=j} \binom{k_1+w_1}{w_1} \binom{k_2+w_2}{w_2} = \binom{k_1+k_2+j+1}{j}.
\end{align*}
\end{lemma}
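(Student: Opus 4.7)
The plan is to prove this identity by comparing coefficients in the generating function factorization $(1-x)^{-(k_1+1)}(1-x)^{-(k_2+1)} = (1-x)^{-(k_1+k_2+2)}$. Using the negative binomial series
\begin{align*}
\frac{1}{(1-x)^{k+1}} = \sum_{n=0}^{\infty} \binom{k+n}{n} x^n,
\end{align*}
the coefficient of $x^j$ on the right-hand side is $\binom{k_1+k_2+1+j}{j}$, while the Cauchy product of the two series on the left-hand side produces a coefficient of $x^j$ equal to $\sum_{w_1+w_2=j} \binom{k_1+w_1}{w_1}\binom{k_2+w_2}{w_2}$. Since in the statement $w = (w_1, w_2)$ is a two-part multi-index and $|w| = w_1 + w_2$, this sum is precisely the left-hand side of the lemma, and the identity follows.

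A purely combinatorial alternative is available. The quantity $\binom{k_i + w_i}{w_i}$ counts multisets of size $w_i$ drawn from an alphabet of $k_i + 1$ symbols. Summing over $w_1 + w_2 = j$ on the left-hand side thus counts ordered pairs consisting of such multisets with total size $j$. Taking the two alphabets to be disjoint and concatenating gives a bijection with multisets of size $j$ drawn from an alphabet of $k_1 + k_2 + 2$ symbols, which is enumerated by $\binom{k_1+k_2+1+j}{j}$. As a third fallback, one can induct on $j$ and apply Pascal's rule term by term.

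Since this is a standard identity and independent of the Drury-Arveson machinery developed earlier in the paper, I anticipate no real obstacle; the only care required is bookkeeping the shift by one in the upper index ($k_i + 1$ symbols rather than $k_i$) so that the final answer comes out as $\binom{k_1+k_2+j+1}{j}$ rather than $\binom{k_1+k_2+j}{j}$. I would write up the generating-function argument as the main proof, as it is the shortest and most self-contained.
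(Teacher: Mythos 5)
Your generating-function argument is correct: extracting the coefficient of $x^j$ from $(1-x)^{-(k_1+1)}(1-x)^{-(k_2+1)}=(1-x)^{-(k_1+k_2+2)}$ via the negative binomial series $\sum_n \binom{k+n}{n}x^n$ gives exactly the stated identity, and you have handled the shift correctly (exponent $k_1+k_2+2$ yields $\binom{k_1+k_2+1+j}{j}$). The multiset bijection is likewise sound. Note that the paper itself gives no proof of this lemma: it is quoted as Corollary 5.4 of the cited reference and used only as the $d=2$ base case for the following $d$-variable lemma, which the paper then establishes by induction on $d$. So your write-up supplies a short self-contained argument where the paper relies on a citation. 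One remark worth making: the generating-function route proves the full $d$-variable identity $\sum_{|w|=j}\prod_{i=1}^d\binom{k_i+w_i}{w_i}=\binom{k_1+\cdots+k_d+j+(d-1)}{j}$ in a single stroke by taking the product of $d$ factors $(1-x)^{-(k_i+1)}$, so it would subsume the paper's inductive lemma as well.
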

\begin{lemma}
For $w=(w_1,w_2,\cdots, w_d)$ and $\alpha=(k_1,k_2\cdots k_d)$,
\begin{align*}
\sum_{|w|=j} \binom{w_1+k_1}{w_1} \binom{w_2+k_2}{w_2} \cdots \binom{w_d+k_d}{w_d} = \binom{k_1+k_2+\cdots +k_d+ j +(d-1)}{j}
\end{align*}
\end{lemma}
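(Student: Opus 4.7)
The plan is to proceed by induction on the dimension $d$, using Lemma \ref{eq16} as the base case. The lemma is the natural generalization of the Chu-Vandermonde-type identity from $d=2$ summands to $d$ summands, and the inductive step will allow us to peel off one variable at a time and apply the stated $d=2$ identity.

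For the base case $d=2$, the statement is exactly Lemma \ref{eq16} (noting that $(d-1)=1$ gives the $+1$ in the upper binomial coefficient there). For the inductive step, assume the identity holds for $d-1$. I would split the sum according to the last index $w_d$:
\begin{align*}
\sum_{|w|=j} \prod_{i=1}^{d} \binom{w_i+k_i}{w_i}
 = \sum_{w_d=0}^{j} \binom{w_d+k_d}{w_d} \sum_{\substack{w_1+\cdots+w_{d-1}\\=j-w_d}} \prod_{i=1}^{d-1}\binom{w_i+k_i}{w_i}.
\end{align*}
By the induction hypothesis applied to the inner sum (with $d-1$ variables summing to $j-w_d$),
\begin{align*}
\sum_{\substack{w_1+\cdots+w_{d-1}\\=j-w_d}} \prod_{i=1}^{d-1}\binom{w_i+k_i}{w_i}
 = \binom{k_1+\cdots+k_{d-1}+(j-w_d)+(d-2)}{j-w_d}.
\end{align*}

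Substituting this back, the remaining sum is
\begin{align*}
\sum_{w_d=0}^{j} \binom{w_d+k_d}{w_d} \binom{(k_1+\cdots+k_{d-1}+d-2)+(j-w_d)}{j-w_d},
\end{align*}
which has exactly the shape of Lemma \ref{eq16} applied with parameters $K_1 = k_d$, $K_2 = k_1+\cdots+k_{d-1}+(d-2)$, and summation index $j$. Invoking that identity yields $\binom{K_1+K_2+j+1}{j} = \binom{k_1+\cdots+k_d+j+(d-1)}{j}$, which is the desired right-hand side. No step here is really an obstacle; the only thing to be careful about is bookkeeping the shift $+(d-2)$ coming from the inductive hypothesis so that it combines correctly with the $+1$ contributed by applying the two-variable identity to produce the $+(d-1)$ in the final answer. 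Alternatively, one can observe that this identity is simply the coefficient of $x^j$ in the product $\prod_{i=1}^d (1-x)^{-(k_i+1)} = (1-x)^{-(k_1+\cdots+k_d+d)}$, which gives a generating-function proof in one line, but the inductive proof has the advantage of directly using the lemma already cited.
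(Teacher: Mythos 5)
Your proof is correct and follows essentially the same route as the paper: induction on $d$, peeling off the last variable, applying the inductive hypothesis to the inner $(d-1)$-variable sum, and then invoking Lemma \ref{eq16} with the shifted parameter $k_1+\cdots+k_{d-1}+(d-2)$ to absorb the remaining single sum. The bookkeeping of the $+(d-2)$ and $+1$ shifts is handled exactly as in the paper's argument.
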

\begin{proof}
By Lemma \ref{eq16}, we know that the result is true for $d=2$. Assuming the result is true for $d-1$, we will show that it is true for $d$. Thus letting $w' =(w_1,w_2,\cdots w_{d-1})$, we have,
\begin{align*}
\sum_{|w'|=l} \binom{w_1+k_1}{w_1} \binom{w_2+k_2}{w_2} \cdots \binom{w_{d-1}+k_{d-1}}{w_{d-1}} = \binom{k_1+k_2+\cdots k_{d-1}+ l +(d-2)}{l}
\end{align*}
Then,
\begin{align*}
&\sum_{|w|=j} \binom{w_1+k_1}{w_1} \binom{w_2+k_2}{w_2} \cdots \binom{w_d+k_d}{w_d}\\
&= \sum_{l=0}^j \  \Bigg(\sum_{|w'|=l}\binom{w_1+k_1}{w_1} \binom{w_2+k_2}{w_2} \cdots \binom{w_{d-1}+k_{d-1}}{w_{d-1}} \Bigg) \binom{j-l+k_d}{j-l}\\
 &=\sum_{l=0}^{j} \binom{k_1+k_2+\cdots +k_{d-1}+ l+ (d-2)}{l}\binom{j-l+k_d}{j-l}\\
 & = \binom{k_1+k_2+\cdots + k_d+ j +(d-1)}{j}
\end{align*}
as desired.
\end{proof}
\begin{proposition}\label{eq18}
    For every multi-index $\alpha= (k_1, k_2,...,k_d)$,
    \begin{align*}
    \lim_{N\rightarrow \infty} \chi_N({S^{\alpha}}^* S^{\alpha})= \frac{(d-1)! \alpha!}{(d-1+|\alpha|)!}
    \end{align*}
\end{proposition}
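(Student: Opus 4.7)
The plan is to compute $\chi_N({S^\alpha}^*S^\alpha)$ explicitly by writing the trace as a sum of diagonal matrix entries in the orthonormal basis $\{e_w\}$, reducing the inner sum over $|w|=j$ via the Chu-Vandermonde-type identity just proved, and then applying the hockey-stick identity to the outer sum over $j$, after which the asymptotics are straightforward.

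First I would compute the diagonal entries. Since $S^\alpha$ is multiplication by $z^\alpha$ and $\langle z^w, z^w\rangle_{H_d^2} = w!/|w|!$, a short calculation gives
\begin{align*}
\langle {S^\alpha}^* S^\alpha e_w, e_w\rangle \;=\; \frac{|w|!}{w!}\,\frac{(w+\alpha)!}{(|w|+|\alpha|)!} \;=\; \frac{\alpha!\,|w|!}{(|w|+|\alpha|)!}\prod_{i=1}^d\binom{w_i+k_i}{w_i},
\end{align*}
using the identity $(w+\alpha)! = w!\,\alpha!\prod_i\binom{w_i+k_i}{w_i}$. Summing over $|w|\le N$ and grouping by $j=|w|$,
\begin{align*}
\operatorname{Tr}\bigl(P_N{S^\alpha}^*S^\alpha P_N\bigr) \;=\; \alpha!\sum_{j=0}^N \frac{j!}{(j+|\alpha|)!}\sum_{|w|=j}\prod_{i=1}^d\binom{w_i+k_i}{w_i}.
\end{align*}

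Next I would apply the preceding lemma (the $d$-variable Chu-Vandermonde identity) to evaluate the inner sum as $\binom{|\alpha|+j+d-1}{j}$. Expanding binomial coefficients and cancelling $j!$ rewrites the summand as $\tfrac{(d-1)!}{(|\alpha|+d-1)!}\binom{|\alpha|+j+d-1}{d-1}$, so that
\begin{align*}
\operatorname{Tr}\bigl(P_N{S^\alpha}^*S^\alpha P_N\bigr) \;=\; \frac{\alpha!\,(d-1)!}{(|\alpha|+d-1)!}\sum_{j=0}^N\binom{|\alpha|+j+d-1}{d-1}.
\end{align*}
The hockey-stick identity $\sum_{i=r}^{n}\binom{i}{r}=\binom{n+1}{r+1}$ then collapses the last sum to $\binom{|\alpha|+N+d}{d} - \binom{|\alpha|+d-1}{d}$.

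Finally, dividing by $\operatorname{rank}(P_N)=\sum_{j=0}^N\binom{j+d-1}{d-1}=\binom{N+d}{d}$ (again by hockey-stick) gives
\begin{align*}
\chi_N\bigl({S^\alpha}^*S^\alpha\bigr) \;=\; \frac{\alpha!\,(d-1)!}{(|\alpha|+d-1)!}\cdot\frac{\binom{|\alpha|+N+d}{d}-\binom{|\alpha|+d-1}{d}}{\binom{N+d}{d}}.
\end{align*}
Both $\binom{|\alpha|+N+d}{d}$ and $\binom{N+d}{d}$ are polynomials in $N$ of degree $d$ with the same leading coefficient $1/d!$, while $\binom{|\alpha|+d-1}{d}$ is a constant, so the fraction tends to $1$ as $N\to\infty$, leaving exactly the claimed limit $\alpha!(d-1)!/(|\alpha|+d-1)!$.

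The only place where I expect to need care is bookkeeping: correctly combining the two weights (the $|w|!/w!$ coming from the normalization of $e_w$ and the $w!/(w+\alpha)!$ coming from the norm of $z^{w+\alpha}$) so that the Chu-Vandermonde identity applies cleanly. Once that is done, the identity plus the hockey-stick formula reduce the problem to a polynomial ratio in $N$, which is routine.
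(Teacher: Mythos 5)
Your proof is correct and follows essentially the same route as the paper: the same computation of the diagonal entries $\langle {S^\alpha}^*S^\alpha e_w, e_w\rangle$ and the same reduction of the inner sum over $|w|=j$ via the Chu--Vandermonde lemma. The only (cosmetic) difference is that you close the outer sum with the hockey-stick identity to get an exact ratio of binomial coefficients before taking the limit, whereas the paper leaves both numerator and denominator as partial sums of degree-$(d-1)$ polynomials in $j$ and compares leading terms.
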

\begin{proof}
For $w=(w_1,w_2,...,w_d)$ we have $||S^{\alpha  } e_w||^2= \frac{1}{\binom{|\alpha|+|w|}{k_1 + w_1, k_2+w_2,\cdots,k_d+w_d}} \binom{|w|}{w_1,w_2,\cdots,w_d}$. Thus,
\begin{align*}
Tr(P_N {S^{\alpha}}^* S^{\alpha} P_N) &= \sum_{|w|\leq N} ||S^{\alpha} e_w||^2\\
& = \sum_{j=0}^N \sum_{|w|=j} \frac{1}{\binom{|\alpha|+j}{k_1 + w_1, k_2+w_2,\cdots,k_d+w_d}}\binom{j}{w_1,w_2,\cdots,w_d} \\
& = \sum_{j=0}^N \sum_{|w|=j}\frac{(k_1+w_1)!(k_2+w_2)!\cdots(k_d+w_d)!}{(|\alpha|+j)!} \frac{j!}{w_1! w_2! \cdots w_d!}\\
& = \sum_{j=0}^N \frac{j!}{(|\alpha|+j)!} \sum_{|w|=j} \frac{(k_1+w_1)! (k_2+w_2)!\cdots (k_d+w_d)!}{w_1!w_2!\cdots w_d!}\\
& = \sum_{j=0}^N \frac{j!k_1! k_2!\cdots k_d!}{(|\alpha|+j)!} \sum_{|w|=j} \binom{k_1+w_1}{w_1} \binom{k_2+w_2}{w_2}\cdots \binom{k_d+w_d}{w_d}\\
& = \sum_{j=0}^N \frac{j!k_1! k_2!\cdots k_d!}{(|\alpha|+j)!} \binom{|\alpha|+j +(d-1)}{j}\\
& = \sum_{j=0}^N \frac{k_1!k_2!\cdots k_d!}{(|\alpha|+ j)!} \frac{(|\alpha|+j+(d-1))!}{(|\alpha|+(d-1))!}\\
& = \frac{\alpha!}{(|\alpha|+(d-1))!}\sum_{j=0}^N \frac{(|\alpha|+j+(d-1))!}{(|\alpha|+j)!}
\end{align*}
Thus we have,
\begin{align*}
\frac{Tr(P_N{S^{\alpha}}^* S^{\alpha} P_N)}{rank(P_N)}& = \frac{\alpha!}{(|\alpha|+(d-1))!}\frac{\sum_{j=0}^N \frac{(|\alpha|+j+(d-1))!}{(|\alpha|+j)!}}{\sum_{j=0}^N {j+d-1 \choose d-1}}\\
& = \frac{(d-1)!\alpha!}{(|\alpha|+(d-1))!} \frac{\sum_{j=0}^N(|\alpha|+ j+1) (|\alpha|+j+2)\cdots (|\alpha|+j+(d-1))}{\sum_{j=0}^N (j+1)(j+2)\cdots (j+(d-1))}
\end{align*}
Hence,
\begin{align*}
\lim_{N\rightarrow \infty} \frac{Tr(P_N{S^{\alpha}}^* S^{\alpha}P_N)}{rank(P_N)}= \frac{(d-1)! \alpha!}{(d-1+|\alpha|)!}
\end{align*}
\end{proof}
\begin{cor}
    For a Toeplitz-like operator $T \in \mathcal{T}_d$ of the form
    \begin{align*}
      T = \sum_{i=1}^n p_i(S)^*q_i(S) + K
    \end{align*}
    where $K$ is a compact operator on $H_d^2$, we have,
    \begin{align*}
        \lim_{N \rightarrow \infty}\chi_N (T) = \int_{\partial \mathbb{B}_d} \sum_{i=1}^n \overline{p_i(z)} q_i(z)d\sigma
    \end{align*}
    where $\sigma$ is the normalized Lebesgue measure on $\partial \mathbb{B}_d$.
\end{cor}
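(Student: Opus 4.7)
The plan is to expand $T$ as a finite linear combination of monomials ${S^\alpha}^* S^\beta$ plus a compact operator, apply the previous propositions term by term, and then recognize the resulting sum as the integral on the sphere via the standard monomial integration formula on $\partial\mathbb{B}_d$.

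First I would use the linearity of $\chi_N$ (noted in the definition and the remark following it) to reduce to two separate tasks: handling the compact summand $K$ and handling the polynomial-in-$S,S^*$ part. For the compact part, Proposition \ref{eq19} gives $\chi_N(K) \to 0$ directly. For the other part, since each $p_i$ and $q_i$ is a polynomial in $S_1,\dots,S_d$, I would write
\begin{align*}
\sum_{i=1}^n p_i(S)^* q_i(S) = \sum_{\alpha,\beta} c_{\alpha,\beta}\, {S^\alpha}^* S^\beta
\end{align*}
as a finite sum, where the coefficients $c_{\alpha,\beta}$ come from expanding the products. Applying $\chi_N$ and using linearity, the off-diagonal terms with $\alpha\neq\beta$ contribute $0$ in the limit by the proposition preceding Lemma \ref{eq16}, while the diagonal terms $\alpha=\beta$ contribute $c_{\alpha,\alpha}\,\frac{(d-1)!\,\alpha!}{(d-1+|\alpha|)!}$ in the limit by Proposition \ref{eq18}.

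The next step is to identify the resulting limit with the claimed integral. The key input is the classical formula
\begin{align*}
\int_{\partial\mathbb{B}_d}\overline{z^\alpha} z^\beta \, d\sigma \;=\; \delta_{\alpha,\beta}\,\frac{(d-1)!\,\alpha!}{(d-1+|\alpha|)!},
\end{align*}
which is the standard computation of monomial moments on the unit sphere (and may be cited from, e.g., Rudin's function theory on the ball). Once this is in place, expanding $\int_{\partial\mathbb{B}_d}\sum_i \overline{p_i(z)} q_i(z)\, d\sigma$ as a finite linear combination of the same monomial integrals gives exactly the same sum $\sum_\alpha c_{\alpha,\alpha}\,\frac{(d-1)!\,\alpha!}{(d-1+|\alpha|)!}$, matching the limit computed from $\chi_N$.

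I do not expect a serious obstacle here: the work has essentially all been done in the preceding propositions, and the only remaining ingredient is the monomial integration formula on the sphere, which is standard. The mildly fiddly point is simply to be careful that the finite expansion of $\sum_i p_i(S)^* q_i(S)$ in terms of ${S^\alpha}^* S^\beta$ produces the same coefficients $c_{\alpha,\beta}$ as the expansion of $\sum_i \overline{p_i(z)}\, q_i(z)$ in terms of $\overline{z^\alpha} z^\beta$; but this is immediate since the two expansions are formally identical on the monomials $S^\gamma \leftrightarrow z^\gamma$.
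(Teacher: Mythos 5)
Your proposal is correct and follows essentially the same route as the paper: linearity of $\chi_N$, Proposition \ref{eq19} for the compact summand, the vanishing of $\chi_N({S^{\alpha}}^*S^{\beta})$ for $\alpha\neq\beta$ together with Proposition \ref{eq18} for the diagonal terms, and the standard monomial moment formula on $\partial\mathbb{B}_d$ (the paper cites Rudin, Propositions 1.4.8 and 1.4.9) to identify the limit with the integral. The only difference is that you spell out the bilinear expansion into monomials explicitly, which the paper leaves implicit.
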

\begin{proof}
By Propositions $\ref{eq17}, \ref{eq18}$ and Proposition 1.4.9 in \cite{MR601594}
\begin{align*}
    \lim_{N \rightarrow \infty} \chi_N(S^{\alpha*} S^{\alpha})= \frac{(d-1)! \alpha!}{(d-1+|\alpha|)!} =\int_{\partial \mathbb{B}_d} \overline{z^{\alpha}} z^{\alpha} d\sigma
\end{align*}
By Proposition 1.4.8 in \cite{MR601594}, for multi-indices $\alpha$ and $\beta$ where $\alpha \neq \beta$,
\begin{align*}
    \chi_N(S^{\alpha *}S^{\beta}) = 0 = \int_{\partial \mathbb{B}_d} \overline{z^{\beta}} z^{\alpha} d\sigma
\end{align*}
Since we also have by Proposition $\ref{eq19}$, that $\lim_{N \rightarrow \infty}\chi_N(K)=0,$ it follows that,
\begin{align*}
        \lim_{N \rightarrow \infty}\chi_N (T) = \int_{\partial \mathbb{B}_d} \sum_{i,j} \overline{p_i(z)} q_j(z)d\sigma
    \end{align*}
    where $\sigma$ is the Lebesgue measure on $\partial \mathbb{B}_d$.
\end{proof}
\begin{proposition}
   For a Toeplitz-like operator $T$ with a continuous symbol $\pi(T)=\phi$,
   the limit of the normalized trace exists, and 
   \begin{align*}
   \lim _{N \rightarrow \infty} \chi_N(T) = \int_{\partial \mathbb{B}_d} \phi(z,\bar{z})d\sigma
    \end{align*}
\end{proposition}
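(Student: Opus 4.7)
The plan is to extend the preceding corollary from the dense subalgebra of operators of the form $\sum_{i} p_i(S)^* q_i(S) + K$ (where $K$ is compact) to all of $\mathcal{T}_d$ by a standard $\varepsilon/3$-approximation argument. The three ingredients that make this work are: the uniform bound $\|\chi_N\| = 1$ on the approximating states, the fact that $\pi \colon \mathcal{T}_d \to C(\partial \mathbb{B}_d)$ is a $*$-homomorphism and hence contractive, and the density remark noted after the short exact sequence.

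First I would fix $T \in \mathcal{T}_d$ with symbol $\phi = \pi(T)$, and given $\varepsilon > 0$ use the density statement to pick an operator $T_0 = \sum_{i=1}^n p_i(S)^* q_i(S) + K$ with $\|T - T_0\| < \varepsilon$. Its symbol is $\pi(T_0) = \sum_{i=1}^n \overline{p_i(z)} q_i(z)$, since $\pi(S_i) = z_i$ and $K \in \ker \pi$. Next I would apply the corollary to obtain $N_0$ such that
\begin{equation*}
\bigl|\chi_N(T_0) - \textstyle\int_{\partial \mathbb{B}_d} \pi(T_0)\, d\sigma\bigr| < \varepsilon \qquad \text{for all } N \geq N_0.
\end{equation*}

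Then, using that each $\chi_N$ is a state (so $|\chi_N(T - T_0)| \leq \|T - T_0\| < \varepsilon$) and that $\pi$ is contractive (so $\|\phi - \pi(T_0)\|_\infty \leq \|T - T_0\| < \varepsilon$, and hence the integrals against the probability measure $\sigma$ differ by less than $\varepsilon$), the triangle inequality
\begin{align*}
\bigl|\chi_N(T) - \textstyle\int \phi\, d\sigma\bigr|
&\leq |\chi_N(T - T_0)| + \bigl|\chi_N(T_0) - \textstyle\int \pi(T_0)\, d\sigma\bigr| + \bigl|\textstyle\int (\pi(T_0) - \phi)\, d\sigma\bigr| \\
&< 3\varepsilon
\end{align*}
holds for all $N \geq N_0$. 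Since $\varepsilon$ was arbitrary, this shows $\lim_N \chi_N(T)$ exists and equals $\int_{\partial \mathbb{B}_d} \phi\, d\sigma$.

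There is no real obstacle here; the one thing to be careful about is that the density set from the remark uses the operator norm, which is exactly the norm in which $\chi_N$ and $\pi$ are contractive, so the two approximation estimates line up cleanly.
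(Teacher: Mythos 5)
Your proof is correct and follows essentially the same approach as the paper: approximate $T$ in operator norm by an element of the dense subalgebra $\sum_i p_i(S)^* q_i(S) + K$, use the corollary on that element, and control both error terms via $\|\chi_N\| = 1$ and the contractivity of $\pi$. The only difference is organizational — the paper first shows $\{\chi_N(T)\}$ is Cauchy and then identifies the limit through the quotient $\mathcal{T}_d/K(H_d^2) \cong C(\partial\mathbb{B}_d)$, whereas you estimate the distance to $\int \phi\, d\sigma$ directly, which is slightly more streamlined.
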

\begin{proof}

Finite sums of the form 
\begin{align}\label{eq1}
 A = \sum_{i=1}^n p_i(S)^* q_i(S) + K
\end{align}
are dense in $\mathcal{T}_d$. For $A$, we have by the previous proposition,
\begin{align*}
\lim_{N \rightarrow \infty} \chi_N(A) = \int \sum_{i=1}^n \overline{p_i(z)} q_i(z) d\sigma
\end{align*}
Now for an arbitrary element $T$ in the $C^*$-algebra $\mathcal{T}_d$ and $\epsilon >0$ choose an element $A$ of the form as in $(\ref{eq1})$ such that $||T-A|| < \epsilon$. Choose $N'$ such that for all $M,N >N'$, $|\chi_M(A)- \chi_N(A)| < \epsilon$. Now we note that,
\begin{align*}
|\chi_N(T) - \chi_M(T)| & = |\chi_N(T) -\chi_N(A) + \chi_N(A) -\chi_M(A) +\chi_M(A) -\chi_M(T)| \\
& \leq |\chi_N (T)- \chi_N(A)| + |\chi_M(A) -\chi_N(A)| + |\chi_M(A) - \chi_M(T)|\\
& \leq ||\chi_N|| \ ||T-A|| + \epsilon + ||\chi_M|| \ ||T-A||\\
& \leq 2 ||T-A|| + \epsilon\\
& \leq 3 \epsilon
\end{align*}
for all $M,N > N'$. Thus, the sequence $\chi_N(T)$ converges for each $T \in \mathcal{T}_d$.\\ Let us call this limit $\chi(T)$. It follows that $\chi$ is also a state and $\chi$ is the unique $weak^*$ limit of $\chi_N$.\\

Since $\chi$ is zero on $K(H_d^2)$ it induces a state on the $C^*$-algebra $\mathcal{T}_d/K(H_d^2)$ which we continue to denote by $\chi$. We have proved so far that $\chi$ agrees with the linear functional $f \mapsto \int_{\partial \mathbb{B}_d}f d\sigma$ on $C(\partial \mathbb{B}_d)$ on the dense set of polynomials of the form ($\ref{eq1}$). Hence, by continuity, they agree everywhere and the result follows.

\end{proof}


\begin{proof}[Proof of Theorem 1.1]
Since $\chi(T) = \lim_{N\rightarrow \infty} \frac{Tr(P_N T P_N)}{rank(P_N)}$ for all $T \in \mathcal{T}_d$ and $\{P_N\}$ is a Følner sequence, by Theorem $\ref{eq20}$, $\{\{P_N\}, \chi\}$ forms a Szeg\H{o} pair. 
\end{proof}
\begin{cor}
Let $T$ be a Toeplitz-like operator $T$ with a continuous symbol $\phi$. Then,
    \begin{align}\label{eq12}
    \lim_{N \rightarrow \infty} det(T_N)^{\frac{1}{N}} = \exp\Big( \int_{\mathbb{T}} log(\phi) \  dm \Big)
\end{align}
\end{cor}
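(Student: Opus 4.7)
The plan is to derive this corollary directly from Theorem 1.1 by specializing to the logarithm. The statement as written appears to have inherited notation from the classical Hardy-space case in the introduction; the natural analog in the present setting replaces $N$ by $d_N = \operatorname{rank}(P_N)$, replaces $T_N$ by $P_N T P_N$, and replaces the circle integral by $\int_{\partial \mathbb{B}_d} \log \phi \, d\sigma$. I will prove that version, which is what Theorem 1.1 actually delivers.

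First I would impose the standing hypotheses of Theorem 1.1, namely that $T$ is positive (so $P_N T P_N$ has nonnegative eigenvalues), and strengthen slightly to $T$ invertible, equivalently $\phi \geq \delta > 0$ on $\partial \mathbb{B}_d$. Under this assumption the function $f(x) = \log x$ is continuous on the range $[a,b]$ of $\phi$, so Theorem 1.1 applies directly with this $f$. The key algebraic identity is simply
\begin{align*}
\frac{1}{d_N} \sum_{i=1}^{d_N} \log \lambda_i \;=\; \frac{1}{d_N} \log \prod_{i=1}^{d_N} \lambda_i \;=\; \frac{1}{d_N} \log \det(P_N T P_N),
\end{align*}
so Theorem 1.1 gives
\begin{align*}
\lim_{N \to \infty} \frac{1}{d_N} \log \det(P_N T P_N) \;=\; \int_{\partial \mathbb{B}_d} \log \phi \, d\sigma.
\end{align*}
Exponentiating and using continuity of $\exp$ yields the claimed formula
\begin{align*}
\lim_{N \to \infty} \det(P_N T P_N)^{1/d_N} \;=\; \exp\!\Big(\int_{\partial \mathbb{B}_d} \log \phi \, d\sigma\Big).
\end{align*}

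The main obstacle, and essentially the only nontrivial point, is the case $a = 0$, where $\log$ fails to be continuous on $[a,b]$ and Theorem 1.1 does not apply verbatim. To handle this I would approximate $\log$ from below by the continuous truncations $\log_\varepsilon(x) := \log(\max(x,\varepsilon))$. Applying Theorem 1.1 to each $\log_\varepsilon$ controls the limsup of $\frac{1}{d_N}\sum_i \log \lambda_i$ by $\int \log_\varepsilon \phi \, d\sigma$, and monotone convergence (with Fatou on the ball side, using $\log \phi \in L^1(\sigma)$ when $\phi$ does not vanish on a set of positive measure) lets one pass $\varepsilon \to 0$. If $\log \phi \notin L^1(\sigma)$ the right-hand side is $-\infty$ and the determinants tend to $0$, matching the exponential. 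This is the standard way Szeg\H{o}'s theorem is upgraded from continuous to nonnegative integrable symbols, and I would simply cite the classical argument (e.g.\ Section 1.6 of \cite{MR2105088}) since the measure-theoretic content transfers verbatim once Theorem 1.1 is in hand.
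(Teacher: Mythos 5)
The paper offers no proof of this corollary at all (its statement even carries over the Hardy-space notation $T_N$, $\mathbb{T}$, $dm$ unchanged from the introduction), so your reading --- that the intended content is $\lim_{N\to\infty}\det(P_NTP_N)^{1/d_N}=\exp\big(\int_{\partial\mathbb{B}_d}\log\phi\,d\sigma\big)$ for a positive invertible $T$, obtained by taking $f=\log$ in Theorem 1.1 and exponentiating the identity $\frac{1}{d_N}\sum_i\log\lambda_i=\frac{1}{d_N}\log\det(P_NTP_N)$ --- is the right one, and your main argument is correct and is evidently the intended derivation. The one caveat is that your sketch of the degenerate case $\min\phi=0$ only yields the upper bound on the $\limsup$ via the truncations $\log_\varepsilon$; the matching lower bound on the $\liminf$ is the genuinely hard half of the classical extension to integrable symbols and does not transfer for free from Theorem 1.1, but since the paper does not attempt that case (or even state the positivity hypothesis), your invertible-case argument covers everything the corollary can reasonably be asserting.
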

We now indicate how Theorem $\ref{eq10}$ can be generalized to a larger family of functions on the unit ball.
    For $\alpha > -1$, the weighted Lebesgue measure $d \nu_{\alpha}$ is defined by
    \begin{align*}
        d \nu_{\alpha}(z) = c_{\alpha} (1-|z|^2)^{\alpha} d \nu(z)
    \end{align*}
    where $c_{\alpha} = \frac{\Gamma(d+\alpha+1)}{d! \Gamma(\alpha +1)}$ is such that $d\nu_{\alpha}$ is a probability measure on $\mathbb{B}_d$. For $\alpha > -1, p> 0$ the weighted Bergman space $A_p^{\alpha}$ consists of holomorchic functions $f$ in $L^p(\mathbb{B}_d, d \nu_{\alpha})$, that is,
    \begin{align*}
        A^p_{\alpha} = L^p(\mathbb{B}_d, d \nu_{\alpha}) \cap \mathcal{H}(\mathbb{B}_d).
    \end{align*}
    For $p=2$, $A^p_{\alpha}$ is a RKHS with kernel given by,
    \begin{align*}
        K^{\alpha}(z,w) = \frac{1}{(1-\langle z,w\rangle)^{d+1+\alpha}} \ \ z,w \in \mathbb{B}_d
    \end{align*}
    The functions,
    \begin{align*}
        e_m^{\alpha}(z) = \sqrt{\frac{\Gamma(d+|m|+\alpha+1)}{m! \Gamma(d+\alpha+1)}} z^m
    \end{align*}
    form an orthonormal basis for $A_{\alpha}^2$, where $m=(m_1,m_2,\cdots,m_d)$ runs over all multi-indices of non-negative integers.\\

    The shift operators $S^{\alpha}_i, i \in {1,2,\cdots,d}$ can be defined on the orthonormal basis as:
    \begin{align*}
        S^{\alpha}_i(e_m^{\alpha}):= z_i e_m^{\alpha}
    \end{align*}
    As before, the Toeplitz algebra of $A^p_{\alpha}$ can be defined as $\tau^{\alpha}_d : = C^*(1, S^{\alpha})$ where $S^{\alpha}:=(S^{\alpha}_1,S^{\alpha}_2,\cdots,S^{\alpha}_d)$.\\
    
    The matrix entries of $S^{\alpha}_i$ wrt. the orthonormal basis $e_m$ can be calculated as:
\begin{align*}
   S^{\alpha}_i e_m^{\alpha}(z)= \sqrt{\frac{m_i+1}{d+|m|+\alpha+2}} e_{m'}^{\alpha}
   \end{align*}
   where $m'=(m_1,m_2,\cdots,m_{i}+1,\cdots,m_d)$.\\
   Compare this with those of $S_i$:
   \begin{align*}
       S_i(e_m) =  \sqrt{\frac{m_i+1}{|m|+1}} e_{m'}
   \end{align*}
   where $m'=(m_1,m_2,\cdots,m_{i}+1,\cdots,m_d)$.

The ratio of these coefficients converges to $1$ as the size $|m|$ goes to infinity. i.e,
\begin{align*}
    \lim_{|m| \rightarrow \infty} \frac{ \sqrt{\frac{m_i+1}{|m|+1}}} {\sqrt{\frac{m_i+1}{d+|m|+\alpha+2}}} =1.
\end{align*}
Consequently, for each $i\in \{1,2,\cdots,d\},$ $S_i^{\alpha}$ is unitarily equivalent to $S_i(I+K_i)$ for some compact operators $K_i$. In other words, the Bergman shifts are unitarily equivalent to compact perturbations of the Drury-Arveson shifts. Since $\chi(K)=0$ for compact operators in our theorem, a similar argument can also be run for the Bergman Toeplitz-algebra. Thus, Theorem $\ref{eq10}$ holds for the weighted Bergman spaces $A^{\alpha}_2$ as well.
\printbibliography
\end{document}